\newcommand{\R}{\mathbb{R}}
\newcommand{\Z}{\mathbb{Z}}
\newcommand{\T}{\mathbb{T}}
\newtheorem{theorem}{Theorem}
\newtheorem{lemma}[theorem]{Lemma}
\newtheorem{example}{Example}
\newtheorem{definition}[theorem]{Definition}
\def\di{\displaystyle}
\begin{document}

\begin{frontmatter}

\title{Differential, integral, and variational delta-embeddings\\
of Lagrangian systems}

\author[jc,jc2]{Jacky Cresson}
\ead{jacky.cresson@univ-pau.fr}

\author[abm,cidma]{Agnieszka B. Malinowska}
\ead{abmalinowska@ua.pt}

\author[cidma,dfmt]{Delfim F. M. Torres}
\ead{delfim@ua.pt}


\address[jc]{Laboratoire de Math\'{e}matiques Appliqu\'{e}es de Pau,
Universit\'{e} de Pau, Pau, France}

\address[jc2]{Institut de M\'ecanique C\'eleste et de Calcul
des \'Eph\'em\'erides, Observatoire de Paris, Paris, France}

\address[abm]{Faculty of Computer Science,
Bia{\l}ystok University of Technology,
15-351 Bia\l ystok, Poland}

\address[cidma]{R\&D Unit CIDMA, University of Aveiro, 3810-193 Aveiro, Portugal}

\address[dfmt]{Department of Mathematics,
University of Aveiro,
3810-193 Aveiro, Portugal}


\begin{abstract}
We introduce the differential, integral, and variational delta-embeddings.
We prove that the integral delta-embedding of the Euler--Lagrange equations
and the variational delta-embedding coincide on an arbitrary time scale.
In particular, a new coherent embedding for the discrete calculus of variations
that is compatible with the least action principle is obtained.
\end{abstract}


\begin{keyword}
coherence \sep  embedding \sep least action principle
\sep discrete calculus of variations
\sep difference Euler--Lagrange equations.

\medskip

\MSC[2010] 34N05 \sep 49K05 \sep 49S05.

\end{keyword}

\end{frontmatter}


\section{Introduction}

An ordinary differential equation is usually
given in differential form, \textrm{i.e.},
\begin{equation*}
\frac{dx(t)}{dt} =f\left(t,x(t)\right),
\quad  t\in [a,b] , \quad x(t) \in \R^n .
\end{equation*}
However, one can also consider the integral form of the equation:
\begin{equation*}
x(t) =x(a)+\di\int_a^t f(s,x(s)) ds , \quad t \in [a,b].
\end{equation*}
The differential form is related to dynamics via the time derivative.
The integral form is useful for proving existence and unicity
of solutions or to study analytical properties of solutions.

In order to give a meaning to a differential equation over a new set
(\textrm{e.g.}, stochastic processes, non-differentiable functions
or discrete sets) one can use the differential or the integral form.
In general, these two generalizations do not give the same object. In the differential case,
we need to extend first the time derivative. As an example, we can look to Schwartz's distributions
\cite{sc} or backward/forward finite differences in the discrete case. Using the new derivative
one can then generalize differential operators and then differential equations of arbitrary order.
In the integral case, one need to give a meaning to the integral over the new set.
This strategy is for example used by It\^o \cite{ito} in order to define stochastic
differential equations, defining first stochastic integrals.
In general, the integral form imposes less constraints on the underlying objects.
This is already in the classical case, where we need a differentiable function
to write the differential form but only continuity or weaker regularity
to give a meaning to the integral form.

The notion of embedding introduced in \cite{cd} is an algebraic procedure providing
an extension of classical differential equations over an arbitrary vector space.
Embedding is based on the differential formulation of the equation.
This formalism was developed in the framework of
fractional equations \cite{cr1}, stochastic processes \cite{cd},
and non-differentiable functions \cite{cg}. Recently, it has been extended to discrete sets in order
to discuss discretization of differential equations and numerical schemes \cite{BCGI}.

In this paper, we define an embedding containing the discrete as well as the continuous
case using the time scale calculus. We use the proposed embedding
in order to define time scale extensions of ordinary differential equations
both in differential and integral forms.

Of particular importance for many applications in physics and mathematics
is the case of Lagrangian systems governed by an Euler--Lagrange equation.
Lagrangian systems possess a variational structure,
\textrm{i.e.}, their solutions correspond to critical points of
a functional and this characterization does not depend on the coordinates system.
This induces strong constraints on solutions, for example the conservation
of energy for autonomous classical Lagrangian systems.
That is, if the Lagrangian does not depend explicitly on $t$,
then the energy is constant along physical trajectories.
We use the time scale embedding in order to provide an analogous
of the classical Lagrangian functional on an arbitrary time scale.
By developing the corresponding calculus of variations,
one then obtain the Euler--Lagrange equation.
This extension of the original Euler--Lagrange equation passing
by the time scales embedding of the functional is called a time scale
{\it variational embedding}. These extensions are known under the terminology
of {\it variational integrators} in the discrete setting.

We then have three ways to extend a given ordinary differential equation:
differential, integral or variational embedding. All these extensions
are a priori different. The coherence problem introduced in \cite{cd},
in the context of the stochastic embedding, consider the problem of finding
conditions under which these extensions coincide.
Here we prove that the integral and variational embeddings are coherent
(see Theorem~\ref{thm:mr}). The result is new and interesting even
in the discrete setting, providing a new form of the Euler--Lagrange
difference equation (see \eqref{neleq}) that is compatible with the least action principle.


\section{Note on the notation used}

We denote by $f$ or $t \rightarrow f(t)$ a function,
and by $f(t)$ the value of the function at point $t$.
Along all the text we consistently use square brackets for the arguments of
operators and round brackets for the arguments of all the other type of functions.
Functionals are denoted by uppercase letters in calligraphic mode.
We denote by $D$ the usual differential operator
and by $\partial_i$ the operator of partial differentiation
with respect to the $i$th variable.


\section{Reminder about the time scale calculus}
\label{sec:rtsc}

The reader interested on the calculus on time scales
is refereed to the book \cite{book:ts:2001}. Here
we just recall the necessary concepts and fix some notations.

A nonempty closed subset of $\R$ is called a time scale
and is denoted by $\T$. Thus, $\mathbb{R}$,
$\mathbb{Z}$, and $\mathbb{N}$, are trivial examples of time scales.
Other examples of time scales are: $[-2,4] \bigcup \mathbb{N}$,
$h\mathbb{Z}:=\{h z \, | \, z \in \mathbb{Z}\}$ for some
$h>0$, $q^{\mathbb{N}_0}:=\{q^k \, | \, k \in \mathbb{N}_0\}$ for some
$q>1$, and the Cantor set. We assume that a time scale $\mathbb{T}$
has the topology that it inherits from the real numbers with the
standard topology.

The forward jump $\sigma :\T \rightarrow \T$ is defined by
$\sigma(t)=\inf{\{s\in\mathbb{T}:s>t}\}$
for all $t\in\mathbb{T}$, while the \emph{backward jump}
$\rho:\mathbb{T}\rightarrow\mathbb{T}$ is defined by
$\rho(t)=\sup{\{s\in\mathbb{T}:s<t}\}$ for all
$t\in\mathbb{T}$, where $\inf\emptyset=\sup\mathbb{T}$
(\textrm{i.e.}, $\sigma(M)=M$ if $\mathbb{T}$ has a maximum $M$)
and $\sup\emptyset=\inf\mathbb{T}$ (\textrm{i.e.}, $\rho(m)=m$ if
$\mathbb{T}$ has a minimum $m$). The \emph{graininess function}
$\mu:\mathbb{T}\rightarrow[0,\infty)$ is defined by
$\mu(t)=\sigma(t)-t$ for all $t\in\mathbb{T}$.

\begin{example}
If $\mathbb{T} = \mathbb{R}$, then $\sigma(t) = \rho(t) = t$
and $\mu(t)=0$. If $\mathbb{T} = h\mathbb{Z}$, then
$\sigma(t) = t + h$, $\rho(t) = t - h$, and
$\mu(t)=h$. On the other hand, if
$\mathbb{T} = q^{\mathbb{N}_0}$, where $q>1$ is a fixed real number,
then we have $\sigma(t) = q t$, $\rho(t) = q^{-1} t$,
and $\mu(t)= (q-1) t$.
\end{example}

In order to introduce the definition of delta derivative, we
define a new set $\mathbb{T}^\kappa$ which is derived from $\mathbb{T}$
as follows: if  $\mathbb{T}$ has a left-scattered maximal point $M$,
then $\mathbb{T}^\kappa := \mathbb{T}\setminus\{M\}$; otherwise,
$\mathbb{T}^\kappa := \mathbb{T}$. In general, for $r \ge 2$,
$\mathbb{T}^{\kappa^r}:=\left(\mathbb{T}^{\kappa^{r-1}}\right)^\kappa$.
Similarly, if  $\mathbb{T}$ has a right-scattered minimum $m$,
then we define $\mathbb{T}_\kappa := \mathbb{T}\setminus\{m\}$; otherwise,
$\mathbb{T}_\kappa := \mathbb{T}$. Moreover, we define
$\mathbb{T}^{\kappa}_{\kappa}:=\mathbb{T}^{\kappa}\cap \mathbb{T}_{\kappa}$.

\begin{definition}
\label{def:de:dif}
We say that a function $f:\mathbb{T}\rightarrow\mathbb{R}$
is \emph{delta differentiable} at $t\in\mathbb{T}^{\kappa}$ if there
exists a number $\Delta[f](t)$ such that for all $\varepsilon>0$
there is a neighborhood $U$ of $t$ such that
$$
\left|f(\sigma(t))-f(s)-\Delta[f](t)(\sigma(t)-s)\right|
\leq\varepsilon|\sigma(t)-s|,\mbox{ for all $s\in U$}.
$$
We call $\Delta[f](t)$ the \emph{delta derivative} of $f$ at $t$ and we say
that $f$ is \emph{delta differentiable} on $\mathbb{T}^{\kappa}$
provided $\Delta[f](t)$ exists for all $t\in\mathbb{T}^{\kappa}$.
\end{definition}

\begin{example}
\label{ex:der:QC}
If $\mathbb{T} = \mathbb{R}$, then $\Delta[f](t) = D[f](t)$, \textrm{i.e.},
the delta derivative coincides with the usual one.
If $\mathbb{T} = h\mathbb{Z}$, then $\Delta[f](t)
= \frac{1}{h}\left(f(t+h) - f(t)\right)
=: \Delta_+[f](t)$, where $\Delta_+$ is the usual forward difference
operator defined by the last equation. If $\mathbb{T} = q^{\mathbb{N}_0}$, $q>1$,
then $\Delta[f](t)=\frac{f(q t)-f(t)}{(q-1) t}$,
\textrm{i.e.}, we get the usual derivative of quantum calculus.
\end{example}

A function $f:\mathbb{T}\rightarrow\mathbb{R}$ is called \emph{rd-continuous}
if it is continuous at right-dense points and if its left-sided limit exists
at left-dense points. We denote the set of all rd-continuous functions by
$C_{\textrm{rd}}(\mathbb{T}, \mathbb{R})$ and the set of all delta differentiable
functions with rd-continuous derivative by $C_{\textrm{rd}}^1(\mathbb{T}, \mathbb{R})$.
It is known (see \cite[Theorem~1.74]{book:ts:2001}) that  rd-continuous functions possess
a \emph{delta antiderivative}, \textrm{i.e.}, there exists a function $\xi$
with $\Delta[\xi]=f$, and in this case the \emph{delta integral}
is defined by $\int_{c}^{d}f(t)\Delta t=\xi(d)-\xi(c)$ for all $c,d\in\mathbb{T}$.

\begin{example}
Let $a, b \in \mathbb{T}$ with $a < b$.
If $\mathbb{T} = \mathbb{R}$, then
$\int_{a}^{b}f(t)\Delta t = \int_{a}^{b} f(t) dt$,
where the integral on the right-hand side is the
classical Riemann integral. If $\mathbb{T} = h\mathbb{Z}$,
then $\int_{a}^{b} f(t)\Delta t = \sum_{k=\frac{a}{h}}^{\frac{b}{h}-1}hf(kh)$.
If $\mathbb{T} = q^{\mathbb{N}_0}$, $q>1$, then
$\int_{a}^{b} f(t)\Delta t = (1 - q) \sum_{t \in [a,b)} t f(t)$.
\end{example}

The delta integral has the following properties:
\begin{itemize}

\item[(i)] if $f\in C_{rd}$ and $t \in \mathbb{T}$, then
\begin{equation*}
\int_t^{\sigma(t)} f(\tau)\Delta\tau=\mu(t)f(t)\, ;
\end{equation*}

\item[(ii)]if $c,d\in\mathbb{T}$ and $f$ and $g$
are delta-differentiable, then the following formulas
of integration by parts hold:
\begin{equation*}
\int_{c}^{d} f(\sigma(t))\Delta[g](t)\Delta t
=\left.(fg)(t)\right|_{t=c}^{t=d}
-\int_{c}^{d} \Delta[f](t) g(t)\Delta t,
\end{equation*}
\begin{equation}
\label{int:par:delta}
\int_{c}^{d} f(t) \Delta[g](t)\Delta t
=\left.(fg)(t)\right|_{t=c}^{t=d}
-\int_{c}^{d} \Delta[f](t) g(\sigma(t)) \Delta t.
\end{equation}

\end{itemize}


\section{Time scale embeddings and evaluation operators}
\label{sec:tse}

Let $\T$ be a bounded time scale with $a:= \min \T$ and $b:= \max \T$.
We denote by $C([a,b];\R)$ the set of continuous
functions $x : [a,b] \rightarrow \R$. As introduced
in Section~\ref{sec:rtsc}, by $C_{\textrm{rd}}(\T,\R)$
we denote the set of all real valued rd-continuous functions
defined on $\T$, and by $C_{\textrm{rd}}^1(\mathbb{T}, \mathbb{R})$
the set of all delta differentiable functions with rd-continuous derivative.

A time scale embedding is given by specifying:
\begin{itemize}
\item A mapping $\iota : C([a,b],\R) \rightarrow C_{\textrm{rd}}(\T,\R )$;
\item An operator $\delta : C^1([a,b],\R) \rightarrow C_{\textrm{rd}}^1(\T^\kappa,\R)$,
called a generalized derivative;
\item An operator $J: C([a,b],\R)\rightarrow C_{\textrm{rd}}(\T,\R)$,
called a generalized integral operator.
\end{itemize}

We fix the following embedding:

\begin{definition}[Time scale embedding]
The mapping $\iota$ is obtained by restriction of functions to $\T$.
The operator $\delta$ is chosen to be the $\Delta$ derivative, and the operator
$J$ is given by the $\Delta$ integral as follows:
$$
\delta[x](t) := \Delta[x](t) \, ,  \quad
J[x](t) := \di\int_a^{\sigma (t)} x(s) \Delta s\, .
$$
\end{definition}

\begin{definition}[Evaluation operator]
Let $f :\R \rightarrow \R$ be a continuous function.
We denote by $\widetilde{f}$ the operator associated
to $f$ and defined by
\begin{equation}
\label{evaluation}
\widetilde{f} : \left .
\begin{array}{ccc}
C (\R ,\R ) & \longrightarrow & C (\R ,\R )\\
x & \mapsto & \widetilde{f}[x] := t \rightarrow f(x(t)) .
\end{array}
\right .
\end{equation}
The operator $\widetilde{f}$ given by \eqref{evaluation}
is called the \emph{evaluation operator} associated with $f$.
\end{definition}

The definition of evaluation operator is easily extended in various ways.
We give in Definition~\ref{def:L:eval:oper} a special evaluation operator
that naturally arises in the study of problems of the calculus of variations
and respective Euler--Lagrange equations (\textrm{cf.} Section~\ref{sec:cv}).

\begin{definition}[Lagrangian operator]
\label{def:L:eval:oper}
Let $L : [a,b] \times \R \times \R \rightarrow \R$ be a $C^1$ function defined for all
$(t,x,v) \in [a,b] \times \R^2$ by $L(t,x,v)\in \R$. The \emph{Lagrangian operator}
$\widetilde{L} : C^1([a,b] ,\R) \rightarrow C^1([a,b] ,\R)$
associated with $L$ is the evaluation operator defined by
$\widetilde{L}[x] := t \rightarrow L\left(t,x(t),D[x](t)\right)$.
\end{definition}

We consider ordinary differential equations of the form
\begin{equation*}
O[x](t) =0, \quad t \in [a,b],
\end{equation*}
where $x\in C^n(\R,\R)$ and $O$ is a differential operator of order $n$,
$n \ge 1$, given by
\begin{equation}
\label{operator}
O=\di\sum_{i=0}^n \widetilde{a_i} \cdot \left( D^i \circ \widetilde{b_i} \right),
\end{equation}
where $(\widetilde{a_i})$ (resp. $(\widetilde{b_i})$)
is the family of evaluation operators associated
to a family of functions $(a_i)$ (resp. $(b_i)$),
and $D^i$ is the derivative of order $i$, \textrm{i.e.},
$D^i = \frac{d^i}{dt^i}$.
Differential operators of form \eqref{operator} play
a crucial role when dealing with Euler--Lagrange equations.

We are now ready to define the time scale embedding
of evaluation and differential operators.

\begin{definition}[Time scale embedding of evaluation operators]
Let $f : \R \rightarrow \R$ be a continuous function and $\widetilde{f}$
the associated evaluation operator. The time scale embedding $\widetilde{f}_\T$
of $\widetilde{f}$ is the extension of $\widetilde{f}$ to $C_{rd}(\T ,\R )$:
\begin{equation*}
\widetilde{f}_{\T} : \left .
\begin{array}{ccc}
C_{rd}(\T ,\R ) & \longrightarrow & C_{rd}(\T ,\R )\\
x & \mapsto & \widetilde{f}_\T[x] :=  t \rightarrow f(x(t)) .
\end{array}
\right .
\end{equation*}
\end{definition}

Next definition gives the time scale embedding
of the differential operator \eqref{operator}.

\begin{definition}[Time scale embedding of differential operators]
\label{eq:diff:emb}
The time scale embedding of the differential operator \eqref{operator} is defined by
\begin{equation*}
O_{\Delta} =\di\sum_{i=0}^n \widetilde{a_i}_\T \cdot \left( {\Delta^i} \circ \widetilde{b_i}_\T \right).
\end{equation*}
\end{definition}

The two previous definitions are sufficient to define the time scale embedding
of a given ordinary differential equation.

\begin{definition}[Time scale embedding of differential equations]
\label{embeddingdofferential}
The delta-differential embedding of an ordinary differential equation
$O[x]=0$, $x \in C^n([a,b],\R)$, is given by
$O_{\Delta}[x] =0$, $x\in C^n_{rd}(\T^{\kappa^n} ,\R)$.
\end{definition}

In order to define the delta-integral and variational embeddings
(see Sections~\ref{sec:dealp}, \ref{sec:intgral} and \ref{sec:mr})
we need to know how to embed an integral functional.

\begin{definition}[Time scale embedding of integral functionals]
\label{embeddingfunctional}
Let $L : [a,b] \times \R^2 \rightarrow \R$ be a continuous function and $\mathcal{L}$
the functional defined by
$$
\mathcal{L}(x) = \int_a^t L\left(s,x(s),D[x](s)\right) ds
= \int_a^t \widetilde{L}[x](s) ds.
$$
The time scale embedding $\mathcal{L}_{\Delta}$ of $\mathcal{L}$ is given by
\begin{equation*}
\mathcal{L}_{\Delta}(x) =\int_a^{\sigma(t)} L\left(s,x(s),\Delta[x](s)\right) \Delta s
= \int_a^{\sigma(t)} \widetilde{L}_\T[x](s) \Delta s \, .
\end{equation*}
\end{definition}


\section{Calculus of variations}
\label{sec:cv}

The classical variational functional $\mathcal{L}$ is defined by
\begin{equation}
\label{lagrangianfunctional}
\mathcal{L}(x) =\int_a^b L\left(t,x(t),D[x](t)\right) dt ,
\end{equation}
where $L :[a,b] \times \R \times \R \rightarrow \R$
is a smooth real valued function called the Lagrangian
(see, \textrm{e.g.}, \cite{vanBrunt}).
Functional \eqref{lagrangianfunctional} can be written,
using the Lagrangian operator $\widetilde{L}$ (Definition~\ref{def:L:eval:oper}),
in the following equivalent form:
\begin{equation*}
\mathcal{L}(x) = \int_a^{b} \widetilde{L}[x](t) dt .
\end{equation*}
The Euler--Lagrange equation associated
to \eqref{lagrangianfunctional} is given
(see, \textrm{e.g.}, \cite{vanBrunt}) by
\begin{equation}
\label{eq:class:EL:pre}
D\left[ \tau \rightarrow
\partial_3[L]\left(\tau,x(\tau), D[x](\tau)\right)
\right](t) -\partial_2[L]\left(t,x(t),D[x](t)\right) =0 ,
\end{equation}
$t \in [a,b]$, which we can write, equivalently, as
\begin{equation*}
\left(D \circ \widetilde{\partial_3[L]}\right)[x](t)
- \widetilde{\partial_2[L]}[x](t) =0 .
\end{equation*}
Still another way to write the Euler--Lagrange equation
consists in introducing the differential operator $\mbox{\rm EL}_L$,
called the {\it Euler--Lagrange operator}, given by
\begin{equation*}
\mbox{\rm EL}_L := D \circ \widetilde{\partial_3[L]} - \widetilde{\partial_2[L]} \, .
\end{equation*}
We can then write the Euler--Lagrange equation
simply as $\mbox{\rm EL}_L[x](t)=0$, $t \in [a,b]$.


\section{Delta-differential embedding of the Euler--Lagrange equation}
\label{sec:deEL}

By Definition~\ref{eq:diff:emb}, the time scale delta
embedding of the Euler--Lagrange operator $\mbox{\rm EL}_L$
gives the new operator
\begin{equation*}
\left(\mbox{\rm EL}_L\right)_{\Delta}
:= \Delta \circ  \left(\widetilde{\partial_3[L]}\right)_\T
- \left(\widetilde{\partial_2[L]}\right)_\T.
\end{equation*}
As a consequence, we have the following lemma.

\begin{lemma}[Delta-differential embedding of the Euler--Lagrange equation]
The delta-differential embedding of the Euler--Lagrange equation
is given by $(\mbox{\rm EL}_L)_{\Delta}[x](t) = 0$,
$t \in \T^{\kappa^2}$, \textrm{i.e.},
\begin{equation}
\label{eq:d:EL}
\left(\Delta \circ  \widetilde{\partial_3[L]}_\T\right)[x](t)
- \widetilde{\partial_2[L]}_\T[x](t) = 0
\end{equation}
for any $t \in \T^{\kappa^2}$.
\end{lemma}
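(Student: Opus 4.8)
The plan is to read \eqref{eq:d:EL} off the embedding machinery of Section~\ref{sec:tse}, observing that the Euler--Lagrange equation $\mbox{\rm EL}_L[x](t)=0$ is a differential equation $O[x](t)=0$ whose operator $O=\mbox{\rm EL}_L$ already has the special shape \eqref{operator}. Concretely, I would first exhibit
\[
\mbox{\rm EL}_L = \widetilde{1}\cdot\left(D^1\circ\widetilde{\partial_3[L]}\right) + \widetilde{-1}\cdot\left(D^0\circ\widetilde{\partial_2[L]}\right),
\]
that is, in the notation of \eqref{operator}, take $a_1\equiv 1$, $b_1=\partial_3[L]$, $a_0\equiv -1$, $b_0=\partial_2[L]$, where $\widetilde{\partial_3[L]}$ and $\widetilde{\partial_2[L]}$ are the Lagrangian-type evaluation operators of Definition~\ref{def:L:eval:oper}, so that $\widetilde{\partial_3[L]}[x](t)=\partial_3[L]\left(t,x(t),D[x](t)\right)$ and likewise for $\widetilde{\partial_2[L]}$.

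I would then apply Definition~\ref{eq:diff:emb} termwise. Since the evaluation operators associated with the constant functions $\pm 1$ embed to multiplication by $\pm 1$ on $C_{rd}(\T,\R)$ and $\Delta^0$ is the identity, this yields
\[
(\mbox{\rm EL}_L)_{\Delta} = \Delta\circ\widetilde{\partial_3[L]}_\T - \widetilde{\partial_2[L]}_\T ,
\]
which is exactly the operator displayed just above the statement, with $\widetilde{\partial_j[L]}_\T[x](t)=\partial_j[L]\left(t,x(t),\Delta[x](t)\right)$ for $j=2,3$. By Definition~\ref{embeddingdofferential}, the delta-differential embedding of $\mbox{\rm EL}_L[x]=0$ is then $(\mbox{\rm EL}_L)_\Delta[x]=0$, and it remains only to identify the domain. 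Forming $\Delta\left[\widetilde{\partial_3[L]}_\T[x]\right]$ requires $\widetilde{\partial_3[L]}_\T[x]\in C^1_{rd}(\T^\kappa,\R)$, hence that $\Delta[x]$ exists; thus $x\in C^2_{rd}(\T^{\kappa^2},\R)$ and the equation makes sense for $t\in\T^{\kappa^2}=(\T^\kappa)^\kappa$. Substituting the explicit expressions above gives \eqref{eq:d:EL}.

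The argument is essentially bookkeeping, so I expect no real difficulty. The one point that requires care is that $\widetilde{\partial_3[L]}$ must be treated as a Lagrangian operator (already depending on $D[x]$), not as a plain evaluation operator, so that its embedding is $\widetilde{\partial_3[L]}_\T[x](t)=\partial_3[L]\left(t,x(t),\Delta[x](t)\right)$ with $D[x]$ replaced consistently by $\Delta[x]$; correspondingly one must read the order in \eqref{operator} and in Definition~\ref{embeddingdofferential} as the \emph{effective} order two of $D\circ\widetilde{\partial_3[L]}$, which is what produces the domain $\T^{\kappa^2}$ rather than $\T^\kappa$.
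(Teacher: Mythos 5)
Your proposal is correct and follows exactly the route the paper takes: the paper offers no separate proof, presenting the lemma as an immediate consequence of applying Definition~\ref{eq:diff:emb} to the operator $\mbox{\rm EL}_L = D \circ \widetilde{\partial_3[L]} - \widetilde{\partial_2[L]}$ and invoking Definition~\ref{embeddingdofferential}. Your additional bookkeeping (the explicit decomposition into the form \eqref{operator} and the justification of the domain $\T^{\kappa^2}$) simply makes explicit what the paper leaves implicit.
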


In the discrete case $\T =[a,b]\cap h\Z$, we obtain from \eqref{eq:d:EL}
the well-known discrete version of the Euler--Lagrange equation,
often written as
\begin{equation}
\label{eq:d:d:EL}
\Delta_+ \circ \frac{\partial L}{\partial v}\left(t,x(t),\Delta_+ x(t)\right)
-\frac{\partial L}{\partial x}\left(t,x(t),\Delta_+ x(t)\right)= 0 ,
\end{equation}
$t \in \T^{\kappa^2}$, where $\Delta_+ f(t) = \frac{f(t+h)-f(t)}{h}$.
The important point to note here is that from the numerical point of view,
equation \eqref{eq:d:d:EL} does not provide a good scheme. Let us see
a simple example.

\begin{example}
\label{ex1}
Consider the Lagrangian $L(t,x,v) = \frac{1}{2} v^2 - U(x)$, where $U$
is the potential energy and $(t,x,v) \in [a,b] \times \R\times \R$.
Then the Euler--Lagrange equation \eqref{eq:d:d:EL} gives
\begin{equation}
\label{alg1}
\frac{x_{k+2}-2 x_{k+1}+x_k}{h^2} + \frac{\partial U}{\partial x}(x_k) = 0,
\quad k = 0, \ldots, N-2,
\end{equation}
where $N = \frac{b-a}{h}$ and $x_k = x(a+hk)$.
This numerical scheme is of order one, meaning that we make
an error of order $h$ at each step, which is of course not good.
\end{example}

In the next section we show an alternative
Euler--Lagrange to \eqref{eq:d:d:EL} that leads
to more suitable numerical schemes.
As we shall see in Section~\ref{sec:mr}, this comes
from the fact that the embedded Euler--Lagrange equation \eqref{eq:d:EL}
is not coherent, meaning that it does not preserve the variational structure.
As a consequence, the numerical scheme \eqref{alg1} is not {\it symplectic}
in contrast to the flow of the Lagrangian system (see \cite{Arnold1979}).
In particular, the numerical scheme \eqref{alg1} dissipates artificially energy
(see \cite[Fig.~1, p.~364]{Marsden-West2001}).


\section{Discrete variational embedding}
\label{sec:dealp}

The time scale embedding can be also used to define a delta analogue
of the variational functional \eqref{lagrangianfunctional}.
Using Definition~\ref{embeddingfunctional},
and remembering that $\sigma(b)=b$, the time scale embedding
of \eqref{lagrangianfunctional} is
\begin{equation}
\label{funct}
\mathcal{L}_{\Delta}(x)
=\int_a^b L\left(t,x(t) ,\Delta[x](t)\right)\, \Delta t
=\int_a^b \widetilde{L}_\T[x](t) \Delta t.
\end{equation}
A calculus of variations on time scales for functionals of type \eqref{funct}
is developed in Section~\ref{sec:mr}. Here we just emphasize
that in the discrete case $\T =[a,b]\cap h\Z$
functional \eqref{funct} reduces to the classical discrete Lagrangian functional
\begin{equation}
\label{dpcv}
\mathcal{L}_\Delta(x)= h \sum_{k=0}^{N-1} L\left(t_k,x_k,\Delta_+ x_k\right),
\end{equation}
where $N = \frac{b-a}{h}$, $x_k = x(a+hk)$ and
$\Delta_+ x_k = \frac{x_{k+1}-x_k}{h}$, and that
the Euler--Lagrange equation obtained by applying
the discrete variational principle to \eqref{dpcv}
takes the form
\begin{equation}
\label{eq:nabla:EL}
\Delta_- \circ \frac{\partial L}{\partial v}\left(t,x(t),\Delta_+ x(t)\right)
-\frac{\partial L}{\partial x}\left(t,x(t),\Delta_+ x(t)\right)= 0,
\end{equation}
$t \in \T^{\kappa}_{\kappa}$, where $\Delta_-$ is the backward finite difference operator
defined by $\Delta_- f(t) = \frac{f(t)-f(t-h)}{h}$ \cite{BCGI,lu}.

The numerical scheme corresponding to the
discrete variational embedding,
\textrm{i.e.}, to \eqref{eq:nabla:EL},
is called in the literature a {\it variational integrator} \cite{BCGI,lu}.
Next example shows that the variational integrator
associated with problem of Example~\ref{ex1}
is a better numerical scheme than \eqref{alg1}.

\begin{example}
\label{ex2}
Consider the same Lagrangian as in Example~\ref{ex1} of Section~\ref{sec:deEL}:
$L(t,x,v) = \frac{1}{2} v^2 - U(x)$, where $(t,x,v) \in [a,b] \times \R \times \R$.
The Euler--Lagrange equation \eqref{eq:nabla:EL} can be written as
$$
\frac{x_{k+1}-2 x_{k}+x_{k-1}}{h^2} + \frac{\partial U}{\partial x}(x_k) = 0,
\quad k = 1, \ldots, N-1,
$$
where $N = \frac{b-a}{h}$ and $x_k = x(a+hk)$.
This numerical scheme possess very good properties. In particular, it is easily seen
that the order of approximation is now two and not of order one as in Example~\ref{ex1}.
\end{example}

We remark that the form of $\mathcal{L}_{\Delta}$ given by \eqref{funct}
is not the usual one in the literature of time scales (see \cite{bartos:torres,malina:T,naty}
and references therein). Indeed, in the literature of the calculus of variations
on time scales, the following version of the Lagrangian functional is studied:
\begin{equation}
\label{eq:std:cv:t}
\mathcal{L}_{\T}^{\rm usual}(x)
= \int_a^b L\left(t,x(\sigma(t)),\Delta[x](t)\right) \Delta t.
\end{equation}
However, the composition of $x$ with the forward jump $\sigma$
found in \eqref{eq:std:cv:t} seems not natural from the point of view of embedding.


\section{Delta-integral embedding of the Euler--Lagrange equation in integral form}
\label{sec:intgral}

We begin by rewriting the classical Euler--Lagrange equation
into integral form. Integrating \eqref{eq:class:EL:pre} we obtain that
\begin{equation}
\label{eq:class:EL:if}
\partial_3[L]\left(t,x(t),D[x](t)\right)
= \int_a^{t} \partial_2[L]\left(\tau,x(\tau),D[x](\tau)\right) d\tau + c,
\end{equation}
for some constant $c$ and all $t \in [a,b]$ or, using
the evaluation operator,
\begin{equation*}
\widetilde{\partial_3[L]}[x](t)
= \int_a^{t} \widetilde{\partial_2[L]}[x](\tau) d\tau + c.
\end{equation*}
Using Definition~\ref{embeddingfunctional}, we obtain the delta-integral
embedding of the classical Euler--Lagrange equation \eqref{eq:class:EL:if}.

\begin{lemma}[Delta-integral embedding of the Euler--Lagrange equation in integral form]
The delta-integral embedding of the Euler--Lagrange equation \eqref{eq:class:EL:if} is given by
\begin{equation}
\label{eq:class:EL:if:ts}
\partial_3[L]\left(t,x(t),\Delta[x](t)\right)
= \int_a^{\sigma(t)} \partial_2[L]\left(\tau,x(\tau), \Delta[x](\tau)\right) \Delta \tau + c
\end{equation}
or, equivalently, as
\begin{equation*}
\widetilde{\partial_3[L]}_\T[x](t)
= \int_a^{\sigma(t)} \widetilde{\partial_2[L]}_\T[x](\tau) \Delta \tau + c ,
\end{equation*}
where $c$ is a constant and $t \in \T^\kappa$.
\end{lemma}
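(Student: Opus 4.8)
The plan is to read \eqref{eq:class:EL:if} as already being in the canonical form to which the time scale embedding of Section~\ref{sec:tse} applies, namely ``evaluation operator applied to $x$, equal to an integral functional of $x$ plus a constant'', and then simply to embed each of the three pieces according to its defining rule. Concretely, I would first observe that the left-hand side $\widetilde{\partial_3[L]}[x](t) = \partial_3[L]\bigl(t,x(t),D[x](t)\bigr)$ is the Lagrangian-type evaluation operator (in the sense of Definition~\ref{def:L:eval:oper}) associated with the continuous function $\partial_3[L]:[a,b]\times\R^2\to\R$, while the first term on the right, $\int_a^t \widetilde{\partial_2[L]}[x](\tau)\,d\tau$, is exactly an integral functional of the kind covered by Definition~\ref{embeddingfunctional}, with Lagrangian $\partial_2[L]$ and with the running variable playing the role of the upper endpoint.

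Then I would apply the embedding term by term. The time scale embedding of the evaluation operator $\widetilde{\partial_3[L]}$ replaces $D$ by $\Delta$, sending $\widetilde{\partial_3[L]}[x]$ to $\widetilde{\partial_3[L]}_\T[x]: t\mapsto \partial_3[L]\bigl(t,x(t),\Delta[x](t)\bigr)$. By Definition~\ref{embeddingfunctional}, the integral functional $x\mapsto \int_a^{t}\widetilde{\partial_2[L]}[x](\tau)\,d\tau$ is sent to $x\mapsto \int_a^{\sigma(t)}\widetilde{\partial_2[L]}_\T[x](\tau)\,\Delta\tau = \int_a^{\sigma(t)}\partial_2[L]\bigl(\tau,x(\tau),\Delta[x](\tau)\bigr)\,\Delta\tau$. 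The constant $c$, viewed as the constant evaluation operator $x\mapsto c$, is mapped to itself. Putting the three embedded pieces back together under the same equality sign gives precisely \eqref{eq:class:EL:if:ts} and its operator restatement. For the domain: the embedded left-hand side involves $\Delta[x](t)$, which forces $t\in\T^\kappa$, and for $x\in C^1_{\mathrm{rd}}(\T,\R)$ the map $\tau\mapsto \partial_2[L]\bigl(\tau,x(\tau),\Delta[x](\tau)\bigr)$ is rd-continuous on $\T^\kappa$ (composition of the $C^1$ map $\partial_2[L]$ with rd-continuous functions), so the delta integral on the right is well defined and the identity makes sense for every $t\in\T^\kappa$.

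Every step here is dictated by the definitions, so there is no genuine obstacle; the single point that demands care — and the one that makes the statement non-trivial conceptually — is the upper limit $\sigma(t)$ prescribed by Definition~\ref{embeddingfunctional}. Because of that $\sigma$-shift, \eqref{eq:class:EL:if:ts} is \emph{not} obtained by delta-integrating the differential embedding \eqref{eq:d:EL} (integrating \eqref{eq:d:EL} would produce the upper limit $t$, not $\sigma(t)$), so the delta-integral and delta-differential embeddings are genuinely different objects. Keeping these two formulations carefully apart is exactly what sets up the coherence comparison carried out in Section~\ref{sec:mr}, where \eqref{eq:class:EL:if:ts} — and not \eqref{eq:d:EL} — turns out to match the variational embedding.
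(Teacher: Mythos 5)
Your proposal is correct and matches the paper's own (implicit) argument: the paper gives no separate proof, simply invoking Definition~\ref{embeddingfunctional} to embed the evaluation operator, the integral functional, and the constant term by term, exactly as you do. Your added remarks on the domain $\T^\kappa$ and on the $\sigma(t)$ upper limit distinguishing this from an integrated form of \eqref{eq:d:EL} are accurate and consistent with the coherence discussion in Section~\ref{sec:mr}.
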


Note that in the particular case $\T=[a,b]\cap h\Z$
equation \eqref{eq:class:EL:if:ts} gives the
discrete Euler--Lagrange equation
\begin{equation}
\label{neleq}
\frac{\partial L}{\partial v}\left(t_k,x_k,\frac{x_{k+1}-x_k}{h}\right)
= h \sum_{i=0}^{k} \frac{\partial L}{\partial x}\left(t_i,x_i, \frac{x_{i+1}-x_i}{h}\right)  + c ,
\end{equation}
where $t_i = a + h i$, $i = 0, \ldots, k$, $x_i = x(t_i)$, and $k = 0, \ldots, N-1$.
This numerical scheme is different from \eqref{eq:d:d:EL} and \eqref{eq:nabla:EL},
and has not been discussed before in the literature
with respect to embedding and coherence. This is done
in Section~\ref{sec:mr}.


\section{The delta-variational embedding and coherence}
\label{sec:mr}

Our next theorem shows that equation \eqref{eq:class:EL:if:ts}
can be also obtained from the least action principle.
In other words, Theorem~\ref{thm:mr} asserts that the delta-integral
embedding of the classical Euler--Lagrange equation in integral form
\eqref{eq:class:EL:if} and the delta-variational embedding are coherent.

\begin{theorem}
\label{thm:mr}
If $\hat{x}$ is a local minimizer or maximizer to \eqref{funct}
subject to the boundary conditions $x(a)=x_a$ and $x(b)=x_b$,
then $\hat{x}$ satisfies the Euler--Lagrange equation
\eqref{eq:class:EL:if:ts} for some constant $c$ and all $t \in {\T}^\kappa$.
\end{theorem}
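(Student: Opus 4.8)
The plan is to derive the Euler--Lagrange equation \eqref{eq:class:EL:if:ts} directly from the first variation of the embedded functional $\mathcal{L}_\Delta$ in \eqref{funct}, using the time scale integration by parts formula \eqref{int:par:delta}. First I would fix a variation: let $h \in C^1_{\textrm{rd}}(\T,\R)$ with $h(a)=h(b)=0$, and consider the perturbed curve $\hat{x} + \epsilon h$, which still satisfies the boundary conditions. Since $\hat{x}$ is a local extremizer, the function $\phi(\epsilon) := \mathcal{L}_\Delta(\hat{x}+\epsilon h) = \int_a^b L(t, \hat{x}(t)+\epsilon h(t), \Delta[\hat{x}](t)+\epsilon \Delta[h](t))\, \Delta t$ has a local extremum at $\epsilon = 0$, so $\phi'(0) = 0$. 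Differentiating under the delta-integral sign (justified by $C^1$ smoothness of $L$ and rd-continuity of the integrand, as in the standard time scale calculus of variations) yields
\begin{equation*}
\int_a^b \Bigl[ \partial_2[L]\bigl(t,\hat{x}(t),\Delta[\hat{x}](t)\bigr) h(t) + \partial_3[L]\bigl(t,\hat{x}(t),\Delta[\hat{x}](t)\bigr) \Delta[h](t) \Bigr]\, \Delta t = 0 .
\end{equation*}

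The next step is to eliminate $\Delta[h]$ by integration by parts. Here lies the crucial difference from the usual time scale calculus of variations: because our functional \eqref{funct} contains $\hat{x}(t)$ rather than $\hat{x}(\sigma(t))$, I must integrate the $\partial_2[L]$ term \emph{up} rather than integrating the $\partial_3[L]$ term down. Concretely, introduce the delta antiderivative $A(t) := \int_a^t \partial_2[L]\bigl(\tau,\hat{x}(\tau),\Delta[\hat{x}](\tau)\bigr)\,\Delta\tau$, so that $\Delta[A](t) = \partial_2[L]\bigl(t,\hat{x}(t),\Delta[\hat{x}](t)\bigr)$ and $A(a)=0$. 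Then the first term is $\int_a^b \Delta[A](t)\, h(t)\, \Delta t$, and applying \eqref{int:par:delta} with $f = A$ and $g = h$ — that is, $\int_c^d f(t)\Delta[g](t)\Delta t = (fg)|_c^d - \int_c^d \Delta[f](t) g(\sigma(t))\Delta t$, read in the reverse direction — gives $\int_a^b \Delta[A](t) h(\sigma(t))\,\Delta t = (Ah)|_a^b - \int_a^b A(t)\Delta[h](t)\,\Delta t = -\int_a^b A(t)\Delta[h](t)\,\Delta t$, using $h(a)=h(b)=0$. A subtlety to address is the appearance of $h(\sigma(t))$ versus $h(t)$; I would handle this by instead integrating by parts directly on the mixed expression, writing the sum of the two terms so that after parts everything is paired against a single factor, and being careful that the boundary term vanishes on $\T^\kappa$ by the constraints. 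The cleanest route is: rewrite $\int_a^b \partial_2[L](\cdots) h\,\Delta t$ as $\int_a^b \Delta[A] h\,\Delta t$ and use the \emph{first} integration by parts formula (the one with $f(\sigma(t))$ on the left) so that no $\sigma$ lands on $h$.

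After these manipulations the variational identity collapses to
\begin{equation*}
\int_a^b \Bigl[ \partial_3[L]\bigl(t,\hat{x}(t),\Delta[\hat{x}](t)\bigr) - A(t) \Bigr]\, \Delta[h](t)\, \Delta t = 0
\end{equation*}
for all admissible $h$ vanishing at the endpoints. The concluding step is a time scale version of the fundamental (DuBois--Reymond) lemma: if $\int_a^b g(t)\Delta[h](t)\,\Delta t = 0$ for every $h \in C^1_{\textrm{rd}}$ with $h(a)=h(b)=0$, then $g$ is constant on $\T^\kappa$. Invoking this with $g(t) = \partial_3[L]\bigl(t,\hat{x}(t),\Delta[\hat{x}](t)\bigr) - A(t)$ gives $\partial_3[L]\bigl(t,\hat{x}(t),\Delta[\hat{x}](t)\bigr) - A(t) = c$ for some constant $c$, i.e. $\partial_3[L]\bigl(t,\hat{x}(t),\Delta[\hat{x}](t)\bigr) = \int_a^t \partial_2[L]\bigl(\tau,\hat{x}(\tau),\Delta[\hat{x}](\tau)\bigr)\,\Delta\tau + c$. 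Finally, since $\int_a^{\sigma(t)} = \int_a^t + \int_t^{\sigma(t)}$ and $\int_t^{\sigma(t)} \partial_2[L](\cdots)\Delta\tau = \mu(t)\,\partial_2[L]\bigl(t,\hat{x}(t),\Delta[\hat{x}](t)\bigr)$ by property (i), one would check whether the stated form uses $\sigma(t)$ as the upper limit intentionally — matching Definition~\ref{embeddingfunctional}'s $\int_a^{\sigma(t)}$ convention — and adjust the constant accordingly, or simply note that on $\T^\kappa$ both forms are equivalent up to redefining $c$. I expect the main obstacle to be precisely the bookkeeping of $\sigma$-shifts in the integration by parts: choosing which of the two formulas in \eqref{int:par:delta} to apply so that the boundary terms vanish cleanly and no unwanted $x\circ\sigma$ or $h\circ\sigma$ is introduced, and then proving (or citing) the time scale DuBois--Reymond lemma in the exact form needed.
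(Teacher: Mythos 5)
Your overall strategy is exactly the paper's: form $\phi(\varepsilon)=\mathcal{L}_\Delta(\hat{x}+\varepsilon h)$, set $\phi'(0)=0$, integrate the $\partial_2[L]$ term \emph{up} via its delta antiderivative $A(t)=\int_a^t \partial_2[L](\tau,\hat{x}(\tau),\Delta[\hat{x}](\tau))\,\Delta\tau$, kill the boundary term using $h(a)=h(b)=0$ and $A(a)=0$, and conclude with the Dubois--Reymond lemma on time scales (the paper cites \cite[Lemma~4.1]{b7}). You also correctly identify that the right tool is the first integration by parts formula, applied with $f=A$, $g=h$, so that no $\sigma$ lands on $h$. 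However, your final bookkeeping contradicts that choice and contains a genuine error. That formula gives
\begin{equation*}
\int_a^b \Delta[A](t)\,h(t)\,\Delta t \;=\; \left.(Ah)(t)\right|_{t=a}^{t=b} \;-\; \int_a^b A(\sigma(t))\,\Delta[h](t)\,\Delta t ,
\end{equation*}
so the quantity paired against $\Delta[h](t)$ in the collapsed identity is $\partial_3[L](\cdots)-A(\sigma(t))$, \emph{not} $\partial_3[L](\cdots)-A(t)$ as you wrote. The $\sigma$ that you pushed off of $h$ necessarily lands on $A$, and this is precisely how the upper limit $\sigma(t)$ in \eqref{eq:class:EL:if:ts} arises; it is not an optional convention to be reconciled afterwards.

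The fallback you offer --- that the $\int_a^t$ and $\int_a^{\sigma(t)}$ forms ``are equivalent up to redefining $c$'' --- is false on a general time scale. By property (i) of the delta integral, the two differ by $\int_t^{\sigma(t)}\partial_2[L](\cdots)\Delta\tau=\mu(t)\,\partial_2[L]\bigl(t,\hat{x}(t),\Delta[\hat{x}](t)\bigr)$, which is a nonconstant function of $t$ whenever the time scale has points with $\mu(t)>0$ (e.g.\ $\T=[a,b]\cap h\Z$, the case the paper most cares about); it vanishes only when $\T=\R$. Carried through as written, your argument would therefore produce a different, incorrect Euler--Lagrange equation in the discrete case. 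The fix is simply to apply the integration by parts formula consistently and keep the $A(\sigma(t))$ term; after that, Dubois--Reymond yields \eqref{eq:class:EL:if:ts} exactly as stated, with no adjustment of $c$ needed.
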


\begin{proof}
Suppose that $\mathcal{L}_{\Delta}$ has a weak local extremum at
$\hat{x}$. Let $x = \hat{x} + \varepsilon h$,
where $\varepsilon\in \mathbb{R}$ is a small parameter,
$h\in C^1_{rd}$ such that $h(a) =h(b)=0$. We consider
$$
\phi(\varepsilon) := \mathcal{L}_{\Delta}(\hat{x} + \varepsilon h)
= \int_a^b L\left(t,\hat{x}(t)+\varepsilon h(t),
\Delta[\hat{x}](t)+ \varepsilon \Delta[h](t)\right) \Delta t.
$$
A necessary condition for $\hat{x}$ to be an extremizer is
given by
\begin{equation}
\label{eq:FT}
\left.\phi'(\varepsilon)\right|_{\varepsilon=0} = 0
\Leftrightarrow \int_a^b \Bigl( \partial_2[L]\left(t,\hat{x}(t),\Delta[\hat{x}](t)\right) h(t)
+ \partial_3[L]\left(t,\hat{x}(t),\Delta[\hat{x}](t)\right) \Delta[h](t)\Bigr) \Delta t = 0.
\end{equation}
The integration by parts formula \eqref{int:par:delta} gives
\begin{multline*}
\int_a^b \partial_2[L]\left(t,\hat{x}(t),\Delta[\hat{x}](t)\right) h(t)\Delta t\\
=\int_a^t \partial_2[L]\left(\tau,\hat{x}(\tau),
\Delta[\hat{x}](\tau)\right) \Delta \tau h(t)|_{t=a}^{t=b}
-\int_a^b \left(\int_a^{\sigma (t)} \partial_2[L]\left(\tau,
\hat{x}(\tau),\Delta[\hat{x}](\tau)\right) \Delta \tau
\, \Delta[h](t)\right) \Delta t.
\end{multline*}
Because $h(a) =h(b)= 0$, the necessary condition \eqref{eq:FT} can be written as
\begin{equation*}
\int_a^b
\left(\partial_3[L]\left(t,\hat{x}(t),\Delta[\hat{x}](t)\right)
-\int_a^{\sigma(t)} \partial_2[L]\left(t,\hat{x}(t),\Delta[\hat{x}](t)\right)\Delta \tau \right)
\Delta[h](t) \, \Delta t = 0
\end{equation*}
for all $h\in C_{rd}^1$ such that $h(a) = h(b)=0$.  Thus, by the
Dubois--Reymond Lemma (see \cite[Lemma~4.1]{b7}), we have
\begin{equation*}
\partial_3[L]\left(t,\hat{x}(t),\Delta[\hat{x}](t)\right)
=\int_a^{\sigma(t)} \partial_2[L]\left(\tau,\hat{x}(\tau),\Delta[\hat{x}](\tau)\right)\Delta \tau  + c
\end{equation*}
for some $c\in \mathbb{R}$ and all  $t \in {\T}^\kappa$.
\end{proof}


\section{Conclusion}
\label{sec:lackCo}

Given a variational functional and a corresponding
Euler--Lagrange equation, the problem of coherence
concerns the coincidence of a direct embedding
of the given Euler--Lagrange equation
with the one obtained from the application
of the embedding to the variational functional
followed by application of the least action principle.
An embedding is not always coherent and a nontrivial problem is to find conditions
under which the embedding can be made coherent. An example of this is given
by the standard discrete embedding: the discrete embedding
of the Euler--Lagrange equation gives \eqref{eq:d:d:EL}
but the Euler--Lagrange equation \eqref{eq:nabla:EL}
obtained by the standard discrete calculus of variations does not coincide.
On the other hand, from the point of view of numerical integration
of ordinary differential equations,
we know that the discrete variational embedding is better
than the direct discrete embedding of the Euler--Lagrange equation
(\textrm{cf.} Example~\ref{ex2}).
The lack of coherence means that a pure algebraic discretization
of the Euler--Lagrange equation is not good in general,
because we miss some important dynamical properties
of the equation which are encoded in the Lagrangian functional.
A method to solve this default of coherence had been recently
proposed in \cite{BCGI}, and consists to rewrite
the classical Euler--Lagrange \eqref{eq:class:EL:pre}
as an asymmetric differential equation using left and right derivatives.
Inspired by the results of \cite{fmt}, here we propose a completely
different point of view to embedding based on the
Euler--Lagrange equation in integral form. For that we introduce the new
delta-integral embedding (\textrm{cf.} Definition~\ref{embeddingfunctional}).
Our main result shows that the delta-integral embedding and the delta-variational embedding
are coherent for any possible discretization (Theorem~\ref{thm:mr} is valid
on an arbitrary time scale).


\section*{Acknowledgements}

The second and third authors were partially supported by the
\emph{Systems and Control Group} of the R\&D Unit CIDMA through the
Portuguese Foundation for Science and Technology (FCT). Malinowska
was also supported by BUT Grant S/WI/2/11; Torres by the FCT research
project PTDC/MAT/113470/2009.


\medskip



\end{document}